\newtheorem{thm}{Theorem}[section]
\newtheorem{prop}[thm]{Proposition}
\newtheorem{corollary}[thm]{Corollary}
\theoremstyle{remark}
\newtheorem{rem}[thm]{Remark}
\newtheorem{expl}[thm]{Example}
\date{}
\begin{document}
\title{Yang--Baxter maps with first--degree--polynomial $2 \times 2$ Lax matrices}
\author{Theodoros E. Kouloukas and Vassilios G. Papageorgiou
\footnote{E-Mail: tkoulou@master.math.upatras.gr, vassilis@math.upatras.gr}\\
Department of Mathematics, University of Patras, \\ GR-265 00 Patras, Greece}
\maketitle

\begin{abstract}
A family of non--parametric Yang--Baxter (YB) maps is constructed by
re--factorization of the product of two  $2 \times 2$ matrix polynomials of first degree. These maps are Poisson
with respect to the Sklyanin bracket. For each Casimir function a
parametric Poisson YB map is generated by reduction on the
corresponding level set. By considering a complete set of Casimir functions
symplectic multi--parametric YB maps are derived.
These maps are quadrirational with explicit formulae  in
terms of matrix operations. Their Lax matrices are, by construction, $2 \times 2$ first--degree--polynomial 
in the spectral parameter and are classified  by Jordan normal form of the leading term. Non--quadrirational 
parametric YB maps constructed as limits of the
quadrirational ones are connected to known integrable systems on quad--graphs.
\end{abstract}

\section{Introduction} \label{intro}

The question of finding set--theoretical solutions of the quantum
Yang-Baxter equation was first suggested by Drinfeld in \cite{DRIN}. 
Certain examples of such solutions had already appeared in the 
relevant literature by Sklyanin \cite{SKL88}. The
dynamical aspects of these solutions were studied by Veselov in
\cite{VES03} where the short term ``Yang-Baxter maps'' was proposed for them.
Recent results \cite{PTV, PT07} connect these solutions with integrable equations on
quad-graphs through symmetry--reduction. Actually the connection 
between the YB relation for maps and the  {\em multidimensional consistency} property 
 \cite{N,BS} for discrete equation on quad--graphs was already noticed by Adler, Bobenko and Suris
(see concluding remarks of \cite{ABS1}). They also gave a classification of YB maps on $\mathbb{CP}^{1} \times
\mathbb{CP}^{1}$ in \cite{ABS2}.
Weinstein and Xu \cite{WX} found a way of constructing YB maps (classical solutions of the quantum YB equation, in their terminology)
using the theory of Poisson--Lie groups. Their method was generalised in \cite{LYZ}. 
The algebraic theory of YB maps was developed by Etingof, Schedler and 
Soloviev \cite{ESS}. It seems though that dressing transformations connected to soliton equations
and associated constructions involving
loop groups are giving easily many low dimensional examples of YB maps as well as the most simple and fundamental  
parametric one i.e.  Adler's map. The constrtuction of the latter in \cite{RV} 
was given  by Hamiltonian reduction of the loop group $LGL(2,\mathbb{R})$
equiped with the Sklyanin bracket \cite{SKL83}. For a review  on YB maps one can look at \cite{VES07}. 

Based on these ideas we present in this work a construction of
symplectic, parametric YB maps on $\mathbb{C}^{2} \times
\mathbb{C}^{2}$ with  $2 \times 2$ first--degree--polynomial Lax  matrices. 
In section 2 we give the necessary definitions and notation. Section 3 contains the
construction of a non--parametric quadrirational YB map from a
re--factorization procedure. The proof of its YB property is put in an appendix.
This map is Poisson with respect to the Sklyanin bracket presented in section 4.
A reduction procedure to symplectic leaves
is also applied in order to obtain symplectic parametric YB maps and
their corresponding Lax matrices. A classification is provided by
Jordan normal forms. In section 5 non--quadrirational YB maps are derived as
limits of the quadrirational ones of the previous section. 
We finally conclude in section 6 giving some perspectives for future work.
\section{Yang-Baxter Maps and Lax Matrices} \label{sec1}

A {\em set--theoretic solution of the Quantum Yang--Baxter
equation} \cite{DRIN}, or just a {\em Yang-Baxter (YB) Map} \cite{VES03} is a map $R:
\mathcal{X} \times \mathcal{X} \rightarrow \mathcal{X} \times
\mathcal{X}$, where $\mathcal{X}$ is any set, which satisfies the
equation:
\begin{equation}
R_{23}R_{13}R_{12}=R_{12}R_{13}R_{23} \label{YBprop}
\end{equation}
where $R_{ij}$ for $i,j=1,...,3$ is the map that acts as $R$ on the
$i$ and $j$ factors of $\mathcal{X} \times \mathcal{X} \times
\mathcal{X}$ and identically on the rest. 
In various  examples of YB maps, e.g. maps arising from geometric 
crystals \cite{ETING}, the set $X$ has the structure of an algebraic variety 
and $R$ is a birational isomorphism. We are also concerned with birational YB maps here as well.
 A Yang-Baxter map
$R:(x,y)\mapsto (u,v)=(u(x,y),v(x,y))$ is called {\em
quadrirational} \cite{ABS2} if the maps
$u(\cdot,y):\mathcal{X}\rightarrow\mathcal{X}$ and
$v(x,\cdot):\mathcal{X}\rightarrow\mathcal{X}$ are bijective
rational maps.

A {\em parametric YB map} is a YB map
$$R:((x,\alpha),(y,\beta))\mapsto((u,\alpha),(v,\beta))=
(u(x,\alpha, y,\beta),v(x,\alpha, y,\beta))$$  where $x, \ y \in
\mathcal{X}$ and the parameters $\alpha, \beta \in \mathbb{C}^n$. It
is useful to keep the parameters separately and denote
$R(x,\alpha,y,\beta)$ by $R_{\alpha,\beta}(x,y)$. A {\em Lax Matrix}
for this map is a matrix $L(x,\alpha, \zeta )$ that depends on the
point $x$, the parameter $\alpha$ and a spectral parameter $\zeta$
(we usually denote it just by $L(x;\alpha)$), such that
\begin{equation} \label{laxmat}
L(u;\alpha)L(v;\beta)=L(y;\beta)L(x;\alpha),
\end{equation}
for any $\zeta\in \mathbb{C}$. Here we have adopted the definition of a
Lax matrix from  \cite{SV} but we have to notice
that this definition does not imply necessary that equation
(\ref{laxmat}) is equivalent to $(u,\ v)=R_{\alpha,\beta}(x,y)$.

We can represent any parametric YB map with an elementary quadrilateral like in Fig.1.

\vspace{1.2cm}

\centertexdraw{ \setunitscale 0.15 \linewd 0.01
\move (7 1) \linewd 0.05 \arrowheadtype t:F \avec(1 7) \lpatt( )
\move (0 0) \linewd 0.1 \lvec(8 0)  \lvec (8 8)  \lvec (0 8)  \lvec (0 0) \lpatt( )
\htext (2.8 -1.5){$(x;\alpha)$} \htext (8.5 3.5){$(y;\beta)$}
\htext (2.8 8.5){$(u;\alpha)$} \htext (-3.3 3.5){$(v;\beta)$}
\htext (4.3 3.8){$R_{\alpha,\beta}$}
}
\begin{center} {\em Fig.1 A map assigned to the edges of a quadrilateral}
\end{center}

Let
$R_{23}R_{13}R_{12}(x,y,z)=(x^{\prime \prime },y^{\prime \prime },z^{\prime
\prime })$ and
$R_{12}R_{13}R_{23}(x,y,z)=(\tilde{\tilde{x}}, \tilde{\tilde{y}},\tilde{\tilde{z}})$.
We can represent these maps as chains of maps at the faces of a cube like in Fig.2.

\vspace{1cm}
\centertexdraw{ \setunitscale 0.15 \linewd 0.01
\move (9.8 2.4) \linewd 0.05 \avec(8.5 7.5) \lpatt( )
\move (7 0.5) \linewd 0.03 \arrowheadtype t:F \avec(1.9 1.6) \lpatt( )
\move (0 0) \linewd 0.01 \lvec(8 0)  \lvec (8 8)  \lvec (0 8)  \lvec (0 0) \lpatt( )
\move (8 0) \linewd 0.01 \lvec(8 8)   \lvec (10 10)  \lvec (10 2) \lvec (8 0)
\move (0 8) \linewd 0.01 \lvec (2 10)   \lvec (10 10)  \lvec (8 8)  \lvec (0 8)\lpatt( )
\move (0 0) \linewd 0.01       \lpatt (0.2 0.2) \lvec(2 2) \lvec(10 2)
\move (0 0) \linewd 0.01       \lpatt (0.2 0.2) \lvec(2 2) \lvec(2 10)

\htext (4 -1.1){$x$} \htext (9.2 0.2){$y$}  \htext (10.3 5.5){$z$}
\htext (6 2.2){$x'$} \htext (0.5 1.3){$y'$}  \htext (1.2 5.3){$z'$}
\htext (4.5 10.5){$x''$} \htext (-0.5 8.5){$y''$}  \htext (-1.5 5.1){$z''$}
\htext (4 8.2){$\tilde{x}$} \htext (8.2 8.8){$\tilde{y}$}  \htext (7.1 4){$\tilde{z}$}
\htext (6.9 10.5){$\tilde{\tilde{x}}$} \htext (0.7 9.7){$\tilde{\tilde{y}}$}  \htext (-1.5 3){$\tilde{\tilde{z}}$}
}
\begin{center} {\em Fig.2 Representation of the Yang--Baxter property}
\end{center}
The first map corresponds to the composition of the down, back, left
faces, while the second one to the right, front and upper faces. All
the parallel edges to the $x$ (resp. $y,z$) axis carry the parameter
$\alpha$ (resp. $\beta$, $\gamma$). So Eq.(\ref{YBprop}) assures
that $x''= \tilde{\tilde{x}},  \ y''=\tilde{\tilde{y}}, \
z''=\tilde{\tilde{z}}$.

The proof of the following proposition based on the associativity
property of matrix multiplication appeares essentially in
\cite{VES03}.

\begin{prop} \label{anagea}
Let $u=(x,y)$, $v=v(x,y) $ and $A(x;\alpha)$ a matrix depending
on a point $x$, a parameter $\alpha$ and a spectral parameter $\zeta$, such that
$A(u;\alpha)A(v;\beta)=A(y;\beta)A(x;\alpha)$. \\
If the equation
\begin{equation}
A( \hat{x}; \alpha )A( \hat{y}; \beta )A(\hat{z}; \gamma )=
A(x; \alpha )A(y; \beta )A(z; \gamma ) \label{xyz}
\end{equation}
implies that $\hat{x}=x, \ \hat{y}=y$ and $\hat{z}=z$,
then the map
$R_{\alpha,\beta}(x,y)=(u,v)$
is a parametric Yang-Baxter map with Lax matrix $A(x;\alpha)$.
\end{prop}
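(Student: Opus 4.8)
The plan is to establish the Yang--Baxter relation (\ref{YBprop}) for $R_{\alpha,\beta}$ directly, by feeding the two composite maps into a single product of three Lax matrices and then reading off the conclusion from the uniqueness hypothesis (\ref{xyz}). Concretely, I would fix the ordered product
$$\mathbf{P}=A(z;\gamma)\,A(y;\beta)\,A(x;\alpha),$$
chosen so that the parameters appear in the order $\gamma,\beta,\alpha$ from left to right. This is exactly the configuration in which every adjacent pair is amenable to the defining Lax relation $A(u;\alpha)A(v;\beta)=A(y;\beta)A(x;\alpha)$ (with the $\beta$--factor to the left of the $\alpha$--factor), together with its analogues obtained by replacing $\beta$ or $\alpha$ by $\gamma$.

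First I would run the chain $R_{23}R_{13}R_{12}$, applied right--to--left. Applying $R_{12}$ amounts to rewriting the rightmost pair $A(y;\beta)A(x;\alpha)$ as $A(x';\alpha)A(y';\beta)$ with $(x',y')=R_{\alpha,\beta}(x,y)$; then $R_{13}$ rewrites the (now adjacent) pair carrying $\gamma$ and $\alpha$, and finally $R_{23}$ rewrites the pair carrying $\gamma$ and $\beta$. After these three associativity--preserving substitutions the product has been reordered to $\alpha,\beta,\gamma$ and reads
$$\mathbf{P}=A(x'';\alpha)\,A(y'';\beta)\,A(z'';\gamma),$$
where $(x'',y'',z'')=R_{23}R_{13}R_{12}(x,y,z)$, matching the labels of Fig.2. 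Running the other chain $R_{12}R_{13}R_{23}$ on the same $\mathbf{P}$ in the analogous fashion (first the $\gamma,\beta$ pair, then the $\gamma,\alpha$ pair, then the $\beta,\alpha$ pair) yields
$$\mathbf{P}=A(\tilde{\tilde{x}};\alpha)\,A(\tilde{\tilde{y}};\beta)\,A(\tilde{\tilde{z}};\gamma),$$
with $(\tilde{\tilde{x}},\tilde{\tilde{y}},\tilde{\tilde{z}})=R_{12}R_{13}R_{23}(x,y,z)$.

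Equating the two expressions for $\mathbf{P}$ gives $A(x'';\alpha)A(y'';\beta)A(z'';\gamma)=A(\tilde{\tilde{x}};\alpha)A(\tilde{\tilde{y}};\beta)A(\tilde{\tilde{z}};\gamma)$, which is precisely an instance of the identity appearing in hypothesis (\ref{xyz}). The assumed implication then forces $x''=\tilde{\tilde{x}}$, $y''=\tilde{\tilde{y}}$ and $z''=\tilde{\tilde{z}}$, i.e. (\ref{YBprop}); and since the Lax relation (\ref{laxmat}) holds by assumption, $A(x;\alpha)$ is by definition a Lax matrix for $R_{\alpha,\beta}$, which completes the argument.

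I would expect the only genuine care to lie in the bookkeeping of the middle step: at each of the three substitutions one must check that the pair acted on by $R_{ij}$ is really adjacent in the current product and occurs in the correct parameter order for the local relation to be applicable. The choice of the reversed initial ordering $\gamma,\beta,\alpha$ is what makes this automatic, so that associativity of matrix multiplication alone carries the whole reduction. There is no analytic or algebraic obstruction beyond this combinatorial matching, which is in fact the algebraic shadow of the commuting cube in Fig.2.
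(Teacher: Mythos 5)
Your proposal is correct and is essentially the paper's own proof: both start from the reversed product $A(z;\gamma)A(y;\beta)A(x;\alpha)$, use associativity together with the local re-factorization relation three times for each composition (the two triples of faces of the cube in Fig.2) to obtain the two ordered expressions $A(x'';\alpha)A(y'';\beta)A(z'';\gamma)$ and $A(\tilde{\tilde{x}};\alpha)A(\tilde{\tilde{y}};\beta)A(\tilde{\tilde{z}};\gamma)$, and then invoke the uniqueness hypothesis (\ref{xyz}) to conclude the Yang--Baxter property (\ref{YBprop}). The only difference is presentational: you spell out the adjacency bookkeeping that the paper leaves implicit.
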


\begin{proof}
Let $A$ be a matrix with the above properties. The cubic
representation of Fig.2 at the down, back and left faces give
$A(y;\beta)A(x;\alpha)=A(x';\alpha)A(y';\beta)$, so
$A(z;\gamma)A(y;\beta)A(x;\alpha)=(A(z;\gamma)A(x';\alpha))A(y';\beta)
=A(x'';\alpha)(A(z';\gamma)A(y';\beta))
=A(x'';\alpha)A(y''\beta)A(z'';\gamma)$.
Similarly from the right, front and upper faces we get
$A(z;\gamma)A(y;\beta)A(x;\alpha)=A(\tilde{\tilde{x}};\alpha)A(\tilde{\tilde{y}};\beta)A(\tilde{\tilde{z}};\gamma)$.
So we have that
$$A(x'';\alpha)A(y''\beta)A(z'';\gamma)=
A(\tilde{\tilde{x}};\alpha)A(\tilde{\tilde{y}};\beta)A(\tilde{\tilde{z}};\gamma)$$
which implies that $x''= \tilde{\tilde{x}},  \ y''=\tilde{\tilde{y}}, \ z''=\tilde{\tilde{z}}$.
i.e. the Yang-Baxter property (\ref{YBprop}).
\end{proof}

\section{Yang--Baxter maps from matrix re-factorization} \label{sec2}

Our aim is to find the YB maps corresponding to those $2 \times 2$ Lax
matrices that are first--degree matrix polynomials with respect
to the spectral parameter. So we consider the set $\mathcal{L}$ of
$2\times 2$ polynomial matrices of the form $L(\zeta)=A-\zeta B$,
$\zeta\in\mathbb{C}$. Let us fix  a constant matrix $B$ in
$GL_{2}(\mathbb{C})$. We denote by $i_B$ the immersion
$i_B:GL_{2}(\mathbb{C})\rightarrow \mathcal{L}$ with $i_B(A)=A-\zeta
B$ and by $p_{A}$ the polynomial
$$p_{A}(\zeta ):=\det(A-\zeta B)\equiv f_{2}(A) \zeta^{2}-f_{1}(A) \zeta + f_{0}(A)$$
where the scalar functions $f_{i}$, $i=0,1,2$ are given by
$$f_{2}(A)=\det B, \ \ f_{1}(A)=\det BTr(AB^{-1}), \ \ f_{0}(A)= \det A .$$
We also define the functions $P_{i}:GL_{2}(\mathbb{C})\times
GL_{2}(\mathbb{C})\rightarrow GL_{2}(\mathbb{C})$ for $i=1,2$, with
\begin{eqnarray} \label{p1p2}
 \ \ \ \ \ \ P_{1}(X,Y) &=& f_{2}(X)(YB+BX)-f_{1}(X)B^{2}, \\
 \ \ \ \ \ \ P_{2}(X,Y) &=& f_{2}(X)YX-f_{0}(X)B^{2}.
\end{eqnarray}
Let $X$, $Y$ be generic elements of $GL_{2}(\mathbb{C})$. We want to
find $U=U(X,Y), \ V=V(X,Y)$, such that the equation
\begin{equation}
i_{B}(U)i_{B}(V)=i_{B}(Y)i_{B}(X) \label{fact}
\end{equation}
holds for any $\zeta \in \mathbb{C} $. First we notice that this
equation admits the trivial solution $U=Y$, $V=X$. The next
proposition give us a second more interesting solution.

\begin{prop} \label{gen}


Let $X, \ Y \in GL_{2}(\mathbb{C})$ such that $\det P_1(X,Y)\neq 0$.
Then there are unique $U=U(X,Y)$ and $V=V(X,Y)$ in
$GL_{2}(\mathbb{C})$, where
\begin{equation} \label{UV}
U(X,Y)= P_{2}(X,Y)P_{1}(X,Y)^{-1}B, \ \ \ V(X,Y)= B^{-1}(YB+BX-UB),
\end{equation}
that satisfy equation (\ref{fact}) and the constraint $\det(U-Y)\neq
0$ (equivalently $\det(V-X)\neq 0$). The map
$R(X,Y)=(U(X,Y),V(X,Y))$ is a (non--parametric) quadrirational
Yang-Baxter map such that $f_{i}(U)=f_{i}(X)$ and
$f_{i}(V)=f_{i}(Y)$ for $i=0,1,2$.
\end{prop}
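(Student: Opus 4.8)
The plan is to establish the proposition in four stages: solving the factorization equation \eqref{fact}, verifying the spectral invariants $f_i$, proving quadrirationality, and finally invoking Proposition~\ref{anagea} to obtain the Yang--Baxter property. First I would expand both sides of \eqref{fact} as polynomials in $\zeta$. Writing $i_B(U)i_B(V)=(U-\zeta B)(V-\zeta B)=UV-\zeta(UB+BV)+\zeta^2 B^2$ and similarly for the right-hand side, the equation \eqref{fact} decouples into matching the coefficients of $\zeta^0$, $\zeta^1$, $\zeta^2$. The $\zeta^2$ terms agree automatically ($B^2=B^2$). The $\zeta^1$ coefficient gives the linear relation $UB+BV=YB+BX$, which is exactly the defining formula $V=B^{-1}(YB+BX-UB)$ in \eqref{UV}, so $V$ is determined once $U$ is known. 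The $\zeta^0$ coefficient gives the quadratic constraint $UV=YX$. Substituting the expression for $V$ turns this into a single matrix equation for $U$ alone; the main computational task is to show that $U=P_2(X,Y)P_1(X,Y)^{-1}B$ solves it. I expect this to reduce, after using the Cayley--Hamilton identity $X^2=f_2(X)^{-1}\bigl(f_1(X)X-f_0(X)I\bigr)$ applied to $B^{-1}X$ (equivalently the characteristic polynomial $p_A$), to the definitions \eqref{p1p2} of $P_1$ and $P_2$. This algebraic verification, together with the bookkeeping that shows the solution is the \emph{non-trivial} one satisfying $\det(U-Y)\neq0$, is where I anticipate the bulk of the work.

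For the invariants, I would argue that since $\det$ is multiplicative, taking determinants of \eqref{fact} gives $p_U(\zeta)p_V(\zeta)=p_Y(\zeta)p_X(\zeta)$ as an identity of quartic polynomials in $\zeta$. Because each $p_A$ is the quadratic $f_2\zeta^2-f_1\zeta+f_0$ with the \emph{same} leading coefficient $f_2=\det B$ for every matrix, the factorization of the quartic into two such quadratics is rigid; matching the linear-in-$\zeta$ factorization data forces $\{p_U,p_V\}=\{p_Y,p_X\}$ as an unordered pair. The constraint $\det(U-Y)\neq0$ (i.e. $p_U\neq p_Y$ as polynomials would be too weak, so I would instead track which factor is which through the explicit formulas) pins down the correct assignment $p_U=p_X$ and $p_V=p_Y$, yielding $f_i(U)=f_i(X)$ and $f_i(V)=f_i(Y)$ for $i=0,1,2$. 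An alternative, cleaner route is to read these equalities directly off the structure of $P_1,P_2$; but the determinant argument is the conceptually transparent one.

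For quadrirationality I must show that for fixed generic $Y$ the map $X\mapsto U(X,Y)$ is a birational bijection of $GL_2(\mathbb{C})$, and symmetrically that $Y\mapsto V(X,Y)$ is birational for fixed $X$. The key observation is that \eqref{fact} is symmetric under re-factorization: the pair $(U,V)$ is obtained from $(Y,X)$ by exchanging the two linear factors of the product $i_B(Y)i_B(X)$, and this exchange is an involution on the set of ordered factorizations. Concretely, applying the same construction to the product $i_B(U)i_B(V)$ returns $(Y,X)$, because the trivial and non-trivial solutions swap roles; this gives the inverse maps explicitly and hence birationality. The formulas \eqref{UV} are manifestly rational in the entries, and the genericity hypothesis $\det P_1(X,Y)\neq0$ guarantees invertibility on a Zariski-open set, so the required bijections $u(\cdot,y)$ and $v(x,\cdot)$ are rational with rational inverses.

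Finally, to conclude the Yang--Baxter property I would appeal to Proposition~\ref{anagea} with $A=i_B$. The Lax equation $A(U)A(V)=A(Y)A(X)$ is precisely \eqref{fact}, so the first hypothesis holds by construction. The second hypothesis requires that $i_B(\hat x)i_B(\hat y)i_B(\hat z)=i_B(x)i_B(y)i_B(z)$ forces $\hat x=x$, $\hat y=y$, $\hat z=z$: this is a \emph{uniqueness of cubic factorization} statement. I would prove it by comparing coefficients of $\zeta^2,\zeta^1,\zeta^0$ in the product of three first-degree factors, which over generic data determines each factor uniquely up to the rigidity imposed by the common leading term $B$. The hard part of the whole proposition, as noted, is the explicit algebraic verification that \eqref{UV} solves \eqref{fact}; once that is in hand, the invariants, quadrirationality, and the Yang--Baxter conclusion follow from the multiplicativity of the determinant, the involutivity of re-factorization, and Proposition~\ref{anagea} respectively.
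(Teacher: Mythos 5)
Your overall architecture (match powers of $\zeta$ in \eqref{fact}, verify the invariants, prove quadrirationality, invoke Proposition \ref{anagea}) parallels the paper's, but the two steps you treat as routine are exactly where the argument would fail. The crucial gap is in your Yang--Baxter step: the ``uniqueness of cubic factorization'' you propose to prove --- that $i_B(\hat x)i_B(\hat y)i_B(\hat z)=i_B(x)i_B(y)i_B(z)$ forces $\hat x=x$, $\hat y=y$, $\hat z=z$ --- is simply false. The map $\mathcal{R}_B$ itself manufactures counterexamples: replace the adjacent pair by its nontrivial re-factorization (so that $i_B(\hat x)i_B(\hat y)=i_B(x)i_B(y)$ with $\hat x\neq x$) and keep $\hat z=z$. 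Comparing coefficients of $\zeta^0,\zeta^1,\zeta^2$ gives three matrix equations which genuinely admit several solutions, so ``generic data'' cannot rescue the claim. The paper's appendix proves a \emph{constrained} uniqueness statement instead: equation \eqref{xyz1} together with $\det(X'-\zeta B)=\det(X-\zeta B)$ and $\det(Y'-\zeta B)=\det(Y-\zeta B)$ implies $X'=X$, $Y'=Y$, $Z'=Z$, via a Cayley--Hamilton computation; this suffices because every map appearing in the YB cube preserves the $f_i$. That extra idea --- restricting to factorizations with prescribed characteristic polynomials --- is the crux of the proposition and is absent from your plan.

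The other gaps are of the same nature: plausible-sounding claims that are weaker than what is needed. Your involution argument (applying the construction to $(V,U)$ returns $(Y,X)$) proves that $R$ is birational as a map of \emph{pairs}, which does not give quadrirationality, i.e.\ birationality of the partial maps $X\mapsto U(X,Y)$ with $Y$ frozen and $Y\mapsto V(X,Y)$ with $X$ frozen; the paper's own Section 5 shows the distinction is real, since the Adler--Yamilov map is birational but non-quadrirational (its $u$-component has $u_2=y_2$, constant in $x$). The paper instead inverts the partial maps explicitly via the similarity relations \eqref{system2}, e.g.\ $XB^{-1}=(Y-U)^{-1}UB^{-1}(Y-U)$ recovers $X$ from $(U,Y)$ alone. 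Likewise, your determinant ``rigidity'' claim is false: a generic quartic factors into two quadratics with leading coefficient $\det B$ in three unordered ways (one per pairing of roots), so $p_Up_V=p_Yp_X$ does not force $\{p_U,p_V\}=\{p_Y,p_X\}$. In the paper both the invariants and the \emph{uniqueness} of $(U,V)$ --- which you defer to ``bookkeeping'' but never address --- come from the same source: when $\det(U-Y)\neq 0$, relations \eqref{system2} make $UB^{-1}$ conjugate to $XB^{-1}$, whence $p_U=p_X$, and then Cayley--Hamilton applied to $UB^{-1}$ forces formula \eqref{UV}, so the solution is derived rather than merely verified.
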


\begin{proof}
Equation (\ref{fact}) is equivalent with the system:
\begin{equation}
UV=YX,\ \ UB+BV=YB+BX \ .  \label{system}
\end{equation}
If we write the first equation as $UB^{-1}BV=YX$, replace $BV$
from the second one and after some simple algebra, we have that
\begin{equation}
UB^{-1}(Y-U) =(Y-U)XB^{-1} \, , \, (X-V)B^{-1}V = B^{-1}Y(X-V) \ . \label{system2}
\end{equation}
 These two relations show
that if there exist a solution of (\ref{fact}) with $\det(U-Y)\neq
0$ (equivalently $\det(V-X)\neq 0$), then the matrices $UB^{-1}, \
B^{-1}V$ must be similar to the matrices $XB^{-1}$ and $B^{-1}Y$
respectively. So $p_{U}(\zeta )=p_{X}(\zeta )$ and $p_{V}(\zeta
)=p_{Y}(\zeta )$.

Suppose that $U$, $V$ is a solution of equation (\ref{fact}) with
$\det(U-Y)\neq 0$. Cayley--Hamilton theorem states that
$p_{U}(UB^{-1})=0$. Since $p_{U}(\zeta )=p_{X}(\zeta )$, we get that
$p_{X}(UB^{-1})=0$ i.e.
\begin{equation} \label{ch1}
f_{2}(X)(UB^{-1})^{2}-f_{1}(X)(UB^{-1})=-f_{0}(X)I.
\end{equation}
Also system (\ref{system}) gives :
$(UB^{-1})^{2}B^{2}=UB^{-1}(YB+BX)-YX$. So by solving (\ref{ch1})
with respect to $U$ and substituting (\ref{system}) we obtain
(\ref{UV}). Here we have assumed that $\det P_{1}(X,Y)\neq 0 $ for
the generic elements $X$, $Y$ (see remark \ref{remark1}).

So far we have proved that if  a solution exists with
$\det(U-Y)\neq 0$, then it will be unique and will have the form
(\ref{UV}). We still have to check that these $U, \ V$ satisfy
system (\ref{system}). The second equation of the system is
obviously satisfied. Now (\ref{UV}) implies that
$UB^{-1}(f_{2}(X)(YB+BX)-f_{1}(X)B^{2})=(f_{2}(X)YX-f_{0}(X)B^{2})$
and $YB+BX=UB+BV$, so
\begin{equation}
(f_{2}(X)(UB^{-1})^{2}-f_{1}(X)UB^{-1}+f_{0}(X)I)B^{2}=f_{2}(X)(YX-UV). \label{ant}
\end{equation}
Moreover we can write
\begin{eqnarray*}
P_{2}(X,Y)=P_{2}(X,Y)+P_{1}(X,Y)B^{-1}X-P_{1}(X,Y)B^{-1}X = \\
P_{1}(X,Y)B^{-1}X-B^{2}(f_{2}(X)(B^{-1}X)^{2}-f_{1}(X)B^{-1}X+f_{0}(X)I)
\end{eqnarray*}
so $P_{2}(X,Y)=P_{1}(X,Y)B^{-1}X.$ (Here we used again
Cayley--Hamilton theorem). Thus we have the following equivalent
expression for U:
\begin{equation}
UB^{-1}=P_{1}(X,Y)B^{-1}XB^{-1}(P_{1}(X,Y)B^{-1})^{-1} \label{U2}
\end{equation}
Which means that $f_{i}(X)=f_{i}(U)$ for $i=0,1,2$. So from
(\ref{ant}) and the Cayley--Hamilton theorem we finally derive that
$UV=YX$.

We define now  the map 
\begin{equation}R(X,Y)=(U,V)\label{RXY}
\end{equation}
 where $U, \ V$ is the 
solution of (\ref{system}) given by (\ref{UV}).
The quadrirationality of this map is already proven since (\ref{system2}) yields $V, \
X$, in terms of $Y, \ U$:
$$VB^{-1}=(U-Y)^{-1}YB^{-1}(U-Y), \ \
XB^{-1}=(Y-U)^{-1}UB^{-1}(Y-U).$$
from which we had already that $f_{i}(X)=f_{i}(U)$ and $f_{i}(Y)=f_{i}(V)$. 
\end{proof}

We will refer to (\ref{RXY}) as the {\em general Yang-Baxter map associated
with the matrix $B$} and denote it by $\mathcal{R}_{B}$.

\begin{rem} \label{remark1}
The functions $U(X,Y)=U$, $V(X,Y)=V$ of (\ref{UV}) are rational and
of course not defined everywhere on $\mathbb{C}^4 \times
\mathbb{C}^4$ but just in an open and dense domain $I \subset
\mathbb{C}^4 \times \mathbb{C}^4$ defined by the restriction $\det
P_{1}(X,Y)\neq 0$. Proposition \ref{gen} holds in this domain.
\end{rem}
\section{Poisson Structure and Reduction} \label{sec3}
We equip the manifold $\mathcal{L}$ with the Sklyanin bracket
\cite{SKL83}. We will show how we can reduce the general Yang-Baxter
Map to Poisson submanifolds in order to obtain Poisson parametric
Yang-Baxter Maps on these submanifolds. This reduction is possible
due to the fact that the Casimir functions of this structure are
exactly the invariant functions $f_{i}$ that defined in the previous
section. If the corresponding level set is a symplectic submanifold
then the reduced YB map is symplectic.
\subsection{Poisson Structure on $\mathcal{L}$} \label{section31}

The Sklyanin bracket between the variables of a matrix polynomial
$L(\zeta)$ of any degree is given by the formula :
\begin{equation}
\{L(\zeta ) \ \overset{\otimes }{,} \ L(\eta)\}=[\frac{r}{\zeta
-\eta},L(\zeta )\otimes L(\eta)]  \label{sklyanin}
\end{equation}
Here $r$ denotes the permutation matrix: $r(x\otimes y) = y\otimes
x$. The restriction of this bracket on the submanifold
$\mathcal{L}$, of functions of the form $L(\zeta )=A-\zeta B$, with
\\ $A=
\begin{pmatrix}
a _{1} & a_2 \\
a_3 & a_{4}
\end{pmatrix}$ and
$B=
\begin{pmatrix}
b_1 & b_2 \\
b_3 & b_4
\end{pmatrix},
$ is given by the Poisson structure anti--symmetric matrix :
\begin{equation}
J_{B}(A)=
\begin{pmatrix}
0 & -a_{2}b_{1}+a_{1}b_{2} & a_{3}b_{1}-a_{1}b_{3} &
a_{3}b_{2}-a_{2}b_{3} \\
* & 0 & a_{4}b_{1}-a_{1}b_{4} &
a_{4}b_{2}-a_{2}b_{4} \\
* & * & 0 &
-a_{4}b_{3}+a_{3}b_{4} \\
* & * & *& 0
\end{pmatrix}
\label{strmatrix}
\end{equation}
where $J_{B}(A)_{ij}$ denotes the bracket $\{ a_{i}- \zeta
b_{i},a_{j}- \zeta b_{j} \}$, for $i,j=1,...,4$.

First we notice that matrix $B$ belongs to the center of this
Poisson algebra (so $J_{B}(A)_{ij}$ is just $\{ a_{i},a_{j} \}$). As
in \cite{RV} we restrict to the level set for $B=Constant$ and
denote this Poisson submanifold by $\mathcal{L}_{B}$. The Casimir
functions for the Poisson structure (\ref{strmatrix}) on
$\mathcal{L}_{B}$ are:
$$f_{0}(A)=\det A,\ \
f_{1}(A)=a_{1}b_{4}+a_{4}b_{1}-a_{3}b_{2}-a_{2}b_{3} $$ These are
the coefficients of $\det(A-\zeta B)$ and agree with $f_{0}$,
$f_{1}$ defined in section \ref{sec2}. By this notation the general
YB map that we constructed in the previous section is a
non--parametric YB map $\mathcal{R}_{B}:\mathcal{L}_{B}\times
\mathcal{L}_{B}\rightarrow \mathcal{L}_{B}\times \mathcal{L}_{B}$.

We can extend the Poisson bracket of $\mathcal{L}_{B}$ to the Cartesian product
$\mathcal{L}_{B}\times \mathcal{L}_{B}$ as follows :
\begin{equation}
\{x_{i},x_{j}\}=J_{B}(X)_{ij},\ \{y_{i},y_{j}\}=J_{B}(Y)_{ij},\
\{x_{i},y_{j}\}=0,  \label{bracket}
\end{equation}
for any $( X-\zeta B,\ Y-\zeta B )\in \mathcal{L}_{B}\times \mathcal{L}_{B}$ where
$x_{i},\ x_{j},\ y_{i},\ y_{j}$ for $i=1,...,4$ are the elements of the
matrices $X,\ Y$ respectively and $J_{B}$ the matrix of the Poisson structure
(\ref{strmatrix}).

\begin{prop}
The general YB map
$\mathcal{R}_{B}:\mathcal{L}_{B}\times \mathcal{L}_{B}\rightarrow  \mathcal{L}_{B}\times \mathcal{L}_{B}$
is a Poisson map.
\end{prop}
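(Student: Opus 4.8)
**The plan is to show that the map $\mathcal{R}_B$ preserves the Poisson bracket (\ref{bracket}) on $\mathcal{L}_B \times \mathcal{L}_B$.** The most direct route exploits the fact that the Sklyanin bracket (\ref{sklyanin}) is defined on the full manifold $\mathcal{L}$ of matrix polynomials via the classical $r$-matrix structure, and that matrix multiplication of such polynomials is compatible with this bracket. Concretely, I would consider the product map $m:(L_1,L_2)\mapsto L_1 L_2$ together with the refactorization that underlies the definition of $\mathcal{R}_B$. The governing equation (\ref{fact}), namely $i_B(U)i_B(V)=i_B(Y)i_B(X)$, says precisely that the product $L_U(\zeta)L_V(\zeta)$ equals $L_Y(\zeta)L_X(\zeta)$ as a polynomial in $\zeta$. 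Thus $\mathcal{R}_B$ is exactly the operation of refactorizing a product of two Lax polynomials, and the Poisson property should follow from the well-known fact that the Sklyanin bracket makes the monodromy-type product $T(\zeta)=L_1(\zeta)L_2(\zeta)$ satisfy the \emph{same} quadratic $r$-matrix relation $\{T(\zeta)\overset{\otimes}{,}T(\eta)\}=[\tfrac{r}{\zeta-\eta},T(\zeta)\otimes T(\eta)]$.

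First I would verify that, under the extended bracket (\ref{bracket}) in which the $X$- and $Y$-blocks are mutually Poisson-commuting copies of the Sklyanin structure, the product $T(\zeta)=(Y-\zeta B)(X-\zeta B)$ inherits the quadratic $r$-matrix bracket. This is the standard coproduct/comultiplicativity property of the Sklyanin bracket: if $L_1$ and $L_2$ carry the bracket (\ref{sklyanin}) independently and commute with each other, then so does their product. Second, I would observe that the left-hand factorization $(U-\zeta B)(V-\zeta B)$ of the \emph{same} polynomial $T(\zeta)$ must then also satisfy the quadratic relation, simply because $T(\zeta)$ does and the bracket on $T$ is intrinsic to $T$ itself, not to the chosen factors. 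The point is that both factorizations produce the same $T(\zeta)$, so the Poisson data on the level of $T$ is identical.

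The main obstacle, and the step requiring genuine care, is passing from the bracket on $T(\zeta)$ back to the bracket on the individual factors $U,V$. Knowing that $T$ satisfies the quadratic relation does not immediately pin down $\{u_i,u_j\}$, $\{v_i,v_j\}$, $\{u_i,v_j\}$; one must show that the \emph{unique} refactorization with $\det(U-Y)\neq 0$ guaranteed by Proposition \ref{gen} reconstructs factors whose brackets are again of the block form (\ref{bracket}), i.e.\ that $U$ and $V$ each carry a Sklyanin structure and are mutually Poisson-commuting. I would argue this by using the explicit similarity relations from the proof of Proposition \ref{gen}, namely $UB^{-1}=P_1 B^{-1} X B^{-1}(P_1 B^{-1})^{-1}$ and the companion formulas (\ref{system2}) expressing $VB^{-1}$ and $XB^{-1}$ as conjugates, to transport the bracket covariantly. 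Equivalently, one can invoke that the Casimirs $f_i$ are preserved (so each factor stays on the correct symplectic leaf structure) and that the refactorization is a composition of the product map with a section of $m$, both of which are Poisson; the separation of variables guaranteeing $\{u_i,v_j\}=0$ is the delicate combinatorial point.

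An alternative, more computational fallback, should the covariance argument prove awkward to make rigorous in the non-parametric setting, is to verify the Poisson property directly: using the explicit formulae (\ref{UV}) for $U$ and $V$ together with the structure matrix (\ref{strmatrix}), one checks that the Jacobian of $\mathcal{R}_B$ pulls back the Poisson bivector $J_B(X)\oplus J_B(Y)$ to $J_B(U)\oplus J_B(V)$. This reduces to a finite, if tedious, algebraic identity among the entries, and is guaranteed to close because the quadratic $r$-matrix formalism ensures consistency; I would present the conceptual refactorization argument as the proof and relegate the bracket-preservation check to the observation that refactorization of the monodromy is a Poisson automorphism of the $r$-matrix structure.
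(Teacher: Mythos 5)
Your instinct to use multiplicativity of the Sklyanin bracket is sound, and the first step of your argument is correct: since the $X$-entries and $Y$-entries Poisson-commute under (\ref{bracket}), the product $T(\zeta)=i_B(Y)i_B(X)$ does satisfy the same quadratic $r$-matrix relation. But the proof has a genuine gap exactly at the point you yourself flag as ``the main obstacle,'' and none of the patches you offer closes it. Knowing that $T$ satisfies the quadratic relation says nothing, by itself, about the brackets among the entries of $U$ and $V$: those brackets are defined by pulling back (\ref{bracket}) through the map $(X,Y)\mapsto(U,V)$, and the claim $\{u_i,u_j\}=J_B(U)_{ij}$, $\{v_i,v_j\}=J_B(V)_{ij}$, $\{u_i,v_j\}=0$ is precisely the proposition. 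Your first patch, transporting the bracket through the similarity relation (\ref{U2}), is not a covariance statement: the conjugating matrix $P_1(X,Y)B^{-1}$ depends on both dynamical variables, so conjugation by it has no a priori compatibility with the Poisson structure, and verifying compatibility is a computation of the same size as the one being avoided. Your second patch rests on the claim that a section of the Poisson product map $m$ is Poisson; this is false in general -- e.g.\ the projection $(q_1,p_1,q_2,p_2)\mapsto(q_1,p_1)$ between canonical $\mathbb{C}^4$ and $\mathbb{C}^2$ is Poisson but admits no Poisson section. Preservation of the Casimirs $f_i$ is likewise far too weak to pin down the brackets. Finally, your closing assertion that ``refactorization of the monodromy is a Poisson automorphism of the $r$-matrix structure'' is circular: it restates the proposition.

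The conceptual route can be repaired, but it needs one observation you do not make: the multiplication map $m(U,V)=i_B(U)i_B(V)$, viewed as a map into the $8$-dimensional space of polynomials $\zeta^2B^2-\zeta C+D$, is generically a \emph{local diffeomorphism}, because its fibres are finite. Indeed a factorization of $T$ forces $\det T(\zeta)=p_U(\zeta)\,p_V(\zeta)$, there are finitely many ways to split the quartic $\det T$ into two such quadratics, and for each splitting the Cayley--Hamilton argument of Proposition \ref{gen} determines the factors (one gets $U[f_2B^{-1}C-f_1B]=f_2D-f_0B^2$). A local inverse of a Poisson local diffeomorphism is automatically Poisson, so $\mathcal{R}_B=m^{-1}\circ\tilde m$, with $\tilde m(X,Y)=i_B(Y)i_B(X)$ also Poisson by multiplicativity and $m^{-1}$ the branch selected by $f_i(U)=f_i(X)$, $\det(U-Y)\neq 0$, is a composition of Poisson maps. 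For comparison, the paper proves the proposition by exactly the route you relegate to a fallback: a direct verification that the entries of $U,V$ given by (\ref{UV}) satisfy the stated bracket relations. So the computation you treat as optional is, in the paper, the entire proof; if you want to keep the refactorization argument as primary, you must add the local-diffeomorphism step above to make it close.
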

\begin{proof}
A detailed computation shows that the Poisson bracket between the
entries of $U,V$ defined by (\ref{UV}) is:

$$\{u_{i},u_{j}\}=J_{B}(U)_{ij},\ \{v_{i},v_{j}\}=J_{B}(V)_{ij},\
\{u_{i},v_{j}\}=0,$$
for $i=1,...,4$.
\end{proof}
\begin{rem}
Let $\mathfrak{g}$ be the four dimensional four parametric Lie algebra, with basis
$\{e_1,e_2,e_3,e_4\}$, defined by
\begin{eqnarray*}
\lbrack e_{1},e_{2}] &=&b_{2}e_{1}-b_{1}e_{2},\ \  [e_{1},e_{3}]=
-b_{3}e_{1}+b_{1}e_{3},\ \ [e_{1},e_{4}]=-b_{3}e_{2}+b_{2}e_{3},
\\
\lbrack e_{2},e_{3}] &=&b_{1}e_{4}-b_{4}e_{1},\ \
[e_{2},e_{4}]=-b_{4}e_{2}+b_{2}e_{4},\ \
[e_{3},e_{4}]=b_{4}e_{3}-b_{3}e_{4},
\end{eqnarray*}
where $b_i$ for $i=1,...,4$ are free parameters. Then the Poisson structure (\ref{strmatrix})
on $\mathcal{L}_B$ coincides with the corresponding Lie--Poisson structure
on the dual $\mathfrak{g}^*$:
$$\{F,G\}_{L-P}(x)=\left\langle x,[d_{x}F,d_{x}G]\right\rangle ,~~x\in
\mathfrak{g}^{\ast },\ F, \ G\in C^{\infty }(\mathfrak{g}^{\ast }).$$
\end{rem}

\subsection{Parametric YB maps and Lax Matrices} \label{section32}

Let $A-\zeta B$ be a generic element of $\mathcal{L}_{B}$ and
$a_{ij}$ an element of $A$ with $\frac{\partial f_{1}}{\partial a_{ij}}\neq 0$.
If we set  $f_{0}(A)=c$, then there exist a
function $F_{0}$ such that $F_{0}(a_{1},a_{2},a_{3},c)=a_{ij}$,
where $a_{1},a_{2},a_{3}$ here and below denote the remaining three entries of $A$.
We denote by $L'_{0}(a_{1},a_{2},a_{3};c )$ the matrix
that is derived by replacing the $a_{ij}$ element of $A$ by  $F_{0}(a_{1},a_{2},a_{3},c )$, and
by $L_{0}(a_{1},a_{2},a_{3};c )$ the matrix
$i_B(L'_{0}(a_{1},a_{2},a_{3};c ))$.
We also define the projection $p_{ij}:GL_2(\mathbb{C})\rightarrow \mathbb{C}^3$ to the
elements of a matrix except of the $ij$ element and the function
$P:GL_2(\mathbb{C})\times GL_2(\mathbb{C})\rightarrow \mathbb{C}^3\times \mathbb{C}^3$
with $P(X,Y)=(p_{ij}(X),p_{ij}(Y))$.

In a similar way if $a_{ij}$ is an element of $A$ such that
$\frac{\partial f_{1}}{\partial a_{ij}}\neq 0$, we define the matrix
$ L'_{1}(a_{1},a_{2},a_{3};c)$ by setting $\ f_{1}(A)=c$, the matrix
$L_{1}(a_{1},a_{2},a_{3};c)=L'_{1}(a_{1},a_{2},a_{3};c)-\zeta B$ and
the corresponding projection.

Let also\ $a_{ij},\ a_{kl}$ be two elements of $A$ such that
$\det \ [ \frac{\partial (f_{0},f_{1})}{\partial (a_{ij},a_{kl})}  ]\neq 0$.
If we set
$f_{0}(A)=c_{1}$ and $f_{1}(A)=c_{2}$, there exist two functions $G_{0}, \ G_{1}$ such that
$G_{0}(a_{1},a_{2},c_{1} ,c_{2} )=a_{ij}$ and
$G_{1}(a_{1},a_{2},c_{1} ,c_{2} )=a_{kl}$.
We denote by
$L'(a_{1},a_{2};c_{1} ,c_{2} )$ the matrix that is obtained by replacing
the $a_{ij}$ and the $a_{kl}$ elements of $A$ by
$G_{0}(a_{1},a_{2},c_{1} ,c_{2} )$ and $G_{1}(a_{1},a_{2},c_{1},c_{2} )$
respectively and
$L(a_{1},a_{2};c_{1} ,c_{2} )=i_{B}(L'(a_{1},a_{2};c_{1} ,c_{2} ))$.
We also define the projection $q_{ij,kl}:GL(2)\rightarrow \mathbb{C}^2$ to the elements of a matrix except the
$ij$ and $kl$ elements and the function
$Q:GL(2)\times GL(2)\rightarrow \mathbb{C}^2\times \mathbb{C}^2$ with
$Q(X,Y)=(q_{ij,kl}(X),q_{ij,kl}(Y))$.

Since $f_{0},\ f_{1}$ are Casimir functions the sets
\begin{eqnarray*}
\mathcal{P}_{B_{0}}(c) &=&\{L_{0}(a_{1},a_{2},a_{3};c )
\in \mathcal{L}_{B}\ /\ f_{0}(L_{0}(a_{1},a_{2},a_{3};c ))=c \}, \\
\mathcal{P}_{B_{1}}(c ) &=&\{L_{1}(a_{1},a_{2},a_{3};c )\in \mathcal{L}%
_{B}\ /\ f_{1}(L_{1}(a_{1},a_{2},a_{3};c ))=c \}
\end{eqnarray*}

are Poisson submanifolds of $\mathcal{L}_{B}$ and the sets
$$
\mathcal{S}_{B}(c_{1} ,c_{2} ) =
\{ L(a_{1},a_{2};c_{1} ,c_{2} )\in \mathcal{L}_{B}\ / \
f_{0}(L(a_{1},a_{2};c_{1} ,c_{2} )=c_{1},
f_{1}(L(a_{1},a_{2};c_{1} ,c_{2} )=c_{2} \}
$$
are two dimensional symplectic submanifolds of $\mathcal{L}_{B}$
with the reductive symplectic form from (\ref{strmatrix}).

Now we return to the general YB map
$\mathcal{R}_{B}(X,Y)=(U(X,Y),V(X,Y))$, where $U(X,Y)$ and $V(X,Y)$ are defined by
(\ref{UV}).
We can reduce this map to parametric YB maps on the
sets $\mathcal{P}_{B_{0}}\times \mathcal{P}_{B_{0}}$,
$\mathcal{P}_{B_{1}}\times\mathcal{P}_{B_{1}}$ and
$\mathcal{S}_{B}\times \mathcal{S}_{B}$.
\begin{prop} \label{param}
The maps $R^{0} _{\alpha,\beta}$, $R^{1} _{\alpha,\beta}$ defined by
\begin{eqnarray*}
R^{0} _{\alpha,\beta}(x,y)&=&P\circ \mathcal{R}_{B}(L'_{0}(x_{1},x_{2},x_{3};\alpha ),
L'_{0}(y_{1},y_{2},y_{3};\beta ) ) \\
R^{1} _{\alpha,\beta}(x,y)&=&P\circ \mathcal{R}_{B}(L'_{1}(x_{1},x_{2},x_{3};\alpha ),
L'_{1}(y_{1},y_{2},y_{3};\beta ) ),
\end{eqnarray*}
where $x=(x_1,x_2,x_3), \ y=(y_1,y_2,y_3)$, are quadrirational
Poisson parametric Yang-Baxter maps on $\mathbb{C}^3\times
\mathbb{C}^3$ with parameters $\alpha$, $\beta$ and Lax matrices
$L_{0}(x_{1},x_{2},x_{3};\alpha)$, $L_{1}(x_{1},x_{2},x_{3};\alpha)$
respectively.
The map
\begin{equation} \label{symYB}
R _{\alpha,\beta}((x_1,x_2),(y_1,y_2))=Q \circ \mathcal{R}_{B}(L'(x_{1},x_{2};\alpha_{1},\alpha_{2}),
L'(y_{1},y_{2};\beta_{1},\beta_{2})),
\end{equation}
is a quadrirational symplectic
parametric Yang-Baxter map on $\mathbb{C}^2\times \mathbb{C}^2$ with
vector parameters $\alpha=(\alpha_{1}, \alpha_{2})$,
$\beta=(\beta_{1},\beta_{2})$ and Lax matrix
$L(x_{1},x_{2};\alpha_{1},\alpha_{2})$.
\end{prop}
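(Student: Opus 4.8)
The plan is to establish that the three maps $R^0_{\alpha,\beta}$, $R^1_{\alpha,\beta}$, and $R_{\alpha,\beta}$ are parametric YB maps by invoking Proposition \ref{anagea}, and that they inherit quadrirationality, the Poisson property, and (for the last map) symplecticity from the general map $\mathcal{R}_B$ established in Propositions \ref{gen} and the intervening ones. The key conceptual point is that reduction to a level set of the Casimir functions is compatible with all these structures, so the work is to verify the hypotheses of Proposition \ref{anagea} in the reduced setting rather than to redo the computations.

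First I would treat the most involved case, the symplectic map $R_{\alpha,\beta}$ of (\ref{symYB}), since the other two are analogous and in fact simpler. The construction embeds the reduced phase space $\mathbb{C}^2$ into $\mathcal{L}_B$ via $L'$, which fixes the values $f_0 = c_1$, $f_1 = c_2$; here the vector parameter $\alpha = (\alpha_1,\alpha_2)$ plays the role of the pair $(c_1,c_2)$. The essential observation is that, by Proposition \ref{gen}, the general map satisfies $f_i(U) = f_i(X)$ and $f_i(V) = f_i(Y)$ for $i=0,1,2$. This means $\mathcal{R}_B$ preserves the level sets: if $X$ lies in $\mathcal{S}_B(\alpha_1,\alpha_2)$ and $Y$ in $\mathcal{S}_B(\beta_1,\beta_2)$, then $U$ again lies in $\mathcal{S}_B(\alpha_1,\alpha_2)$ and $V$ in $\mathcal{S}_B(\beta_1,\beta_2)$. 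Hence the parameters are carried along the edges exactly as the parametric-YB formalism of Fig.1 requires, and the composition $Q \circ \mathcal{R}_B \circ (L' \times L')$ is well defined as a map on $\mathbb{C}^2 \times \mathbb{C}^2$ with the stated parameter dependence.

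Next I would verify the two hypotheses of Proposition \ref{anagea} for the Lax matrix $A(x;\alpha) := L(x_1,x_2;\alpha_1,\alpha_2)$. The refactorization identity $A(u;\alpha)A(v;\beta) = A(y;\beta)A(x;\alpha)$ is precisely equation (\ref{fact}) rewritten in reduced coordinates, since $L'$ is just the embedding $i_B$ restricted to the level set and $U,V$ satisfy (\ref{fact}) by construction. For the uniqueness hypothesis — that (\ref{xyz}) forces $\hat x = x$, $\hat y = y$, $\hat z = z$ — I would argue from the fact that on the symplectic leaf the pair $(f_0,f_1)$ is fixed, so the spectral data of each factor $A(\cdot;\cdot)$ is determined by its parameter, and the remaining two coordinates $(x_1,x_2)$ are recovered from the matrix because $q_{ij,kl}$ is precisely the complementary projection; the nondegeneracy condition $\det[\partial(f_0,f_1)/\partial(a_{ij},a_{kl})] \neq 0$ guarantees that fixing the two Casimirs and the two free entries determines $A$ uniquely. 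I would then conclude that $R_{\alpha,\beta}$ is a parametric YB map with the claimed Lax matrix, and that quadrirationality descends from the explicit inverse relations (\ref{system2}) restricted to the leaf, while the symplectic property follows because $\mathcal{R}_B$ is Poisson (the previous Proposition) and $\mathcal{S}_B(c_1,c_2)$ is a two-dimensional symplectic leaf, so the reductive form is nondegenerate and preserved.

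The same scheme dispatches $R^0_{\alpha,\beta}$ and $R^1_{\alpha,\beta}$, with $\mathcal{P}_{B_0}(c)$ and $\mathcal{P}_{B_1}(c)$ replacing $\mathcal{S}_B$; these are three-dimensional Poisson submanifolds rather than symplectic leaves, so one obtains Poisson (but not necessarily symplectic) maps, as stated. The main obstacle I anticipate is the uniqueness hypothesis of Proposition \ref{anagea}: one must check that restricting the refactorization to a common Casimir level does not introduce spurious solutions of (\ref{xyz}) beyond the trivial reordering, i.e. that the argument giving uniqueness for $\mathcal{R}_B$ in Proposition \ref{gen} (via Cayley--Hamilton and the similarity relations (\ref{system2})) still forces the three factors to coincide when all matrices share fixed invariants. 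I would address this by noting that the similarity of $UB^{-1}$ to $XB^{-1}$ already pins down the conjugacy class, and that on the symplectic leaf the two free coordinates together with the fixed Casimirs are a complete set of coordinates, so equality of the matrix triples forces equality of all arguments.
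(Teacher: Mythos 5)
Your overall architecture --- level-set preservation via $f_i(U)=f_i(X)$, $f_i(V)=f_i(Y)$, inheritance of quadrirationality and of the Poisson/symplectic structure from $\mathcal{R}_B$, and the Lax-matrix identity obtained by reading equation (\ref{fact}) in reduced coordinates --- agrees with the paper. The gap is in the step you yourself flag as the main obstacle: the uniqueness hypothesis of Proposition \ref{anagea}. What has to be shown is that $L(\hat{x};\alpha)L(\hat{y};\beta)L(\hat{z};\gamma)=L(x;\alpha)L(y;\beta)L(z;\gamma)$ forces the three factors to coincide as matrices. Neither of your two arguments gives this. Injectivity of the chart $(x_1,x_2)\mapsto L(x_1,x_2;\alpha_1,\alpha_2)$, guaranteed by $\det[\partial(f_0,f_1)/\partial(a_{ij},a_{kl})]\neq 0$, only converts matrix equality into coordinate equality; it says nothing about why equality of the triple \emph{products} implies equality of the individual factors, which is the whole difficulty. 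Similarly, ``the similarity of $UB^{-1}$ to $XB^{-1}$ pins down the conjugacy class'' only yields that $\hat{x}$ and $x$ give matrices with the same characteristic polynomial, and two matrices with the same characteristic polynomial are in general not equal. Your concluding sentence, ``equality of the matrix triples forces equality of all arguments,'' is exactly the statement to be proven, so the argument is circular. The genuine content here is a nontrivial computation (the paper carries it out in its appendix): from the triple-product equation one derives a cubic matrix equation for $X'B^{-1}$, uses Cayley--Hamilton together with the coincidence of the Casimirs of $X'$ and $X$ (automatic on the leaf) to reduce it to a linear equation $X'B^{-1}N=XB^{-1}N+A$ with $N=f_2^2L-f_2f_1M+(f_1^2-f_2f_0)B^3$, and then shows that the remainder $A$ factors through the Cayley--Hamilton polynomial of $XB^{-1}$ and hence vanishes; for generic data $\det N\neq 0$, whence $X'=X$, and then $Z'=Z$ and $Y'=Y$ follow.

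Note also that this whole difficulty is avoidable, and that is in fact the route the paper takes as primary: Proposition \ref{gen} already asserts that $\mathcal{R}_B$ \emph{is} a Yang--Baxter map (the appendix computation above is its proof), and since $\mathcal{R}_B$ preserves each level set while $L'$ and $Q$ are mutually inverse bijections between $\mathbb{C}^2$ and the leaf, the parametric YB relation for $R_{\alpha,\beta}$ is precisely the YB relation for $\mathcal{R}_B$ restricted to $\mathcal{S}_B(\alpha_1,\alpha_2)\times\mathcal{S}_B(\beta_1,\beta_2)\times\mathcal{S}_B(\gamma_1,\gamma_2)$, conjugated by the charts; the paper relegates the Proposition \ref{anagea} route to a parenthetical alternative. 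If you replace your unsupported uniqueness claim by this inheritance argument (which uses only facts you already invoke), the rest of your proposal closes up; as written, its central step is missing.
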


\begin{proof}
The construction of the matrices $L'_{0}(x_{1},x_{2},x_{3};\alpha )$
and $L'_{0}(y_{1},y_{2},y_{3};\beta )$ implies that there are $X, \
Y \in GL_2(\mathbb{C})$ such that $L'_{0}(x_{1},x_{2},x_{3};\alpha
)=X$, $L'_{0}(y_{1},y_{2},y_{3};\beta )=Y$ and $f_{0}(X)=\alpha$,
$f_{0}(Y)=\beta$. Then
$$\mathcal{R}_{B}(L'_{0}(x_{1},x_{2},x_{3};\alpha ),
L'_{0}(y_{1},y_{2},y_{3};\beta )) = \mathcal{R}_{B}(X,Y)=(U,V),$$
where
$U=U(X,Y)$, $V=V(X,Y)$ are defined
by (\ref{UV}).  This is a Poisson YB map such that
$f_{0}(U)=\alpha$, $f_{0}(V)=\beta$.
So $U=L'_{0}(u_{1},u_{2},u_{3};\alpha )$,
$V=L'_{0}(v_{1},v_{2},v_{3};\beta )$.
The projection $P(U,V)$ give us
the corresponding elements $u=(u_{1},u_{2},u_{3}),v=(v_{1},v_{2},v_{3})$.
The Yang Baxter property of this map, as well as the quadrirationality,
are immediately derived from the YB property and the quadrirationality of $\mathcal{R}_{B}$.

Since $i_{B}(U)i_{B}(V)=i_{B}(Y)i_{B}(X)$
we'll have that
$$(L'_{0}(u;\alpha )-\zeta B)(L'_{0}(v;\beta )-\zeta B)=
(L'_{0}(y;\beta )-\zeta B)(L'_{0}(x;\alpha )-\zeta B)$$ or
equivalently $L_{0}(u_{1},u_{2},u_{3};\alpha
)L_{0}(v_{1},v_{2},v_{3};\beta )= L_{0}(y_{1},y_{2},y_{3};\beta
)L_{0}(x_{1},x_{2},x_{3};\alpha )$ which means that
$L_{0}(x_{1},x_{2},x_{3};\alpha )$ is a Lax matrix for this YB map.
(Alternatively we can prove the YB property from proposition
\ref{anagea} by showing that the equation $L_{0}(x';\alpha
)L_{0}(y';\beta )L_{0}(z';\gamma )=L_{0}(x;\alpha )L_{0}(y;\beta
)L_{0}(z;\gamma )$ implies $x'=x,\ y'=y$ and $z'=z$).

The proof
for the other maps is similar.
\end{proof}

In correspondence with remark \ref{remark1},
when we refer to YB maps on $\mathbb{C}^3 \times \mathbb{C}^3$
we mean on the open and dense domain of it that are defined
(respectively for $\mathbb{C}^2 \times \mathbb{C}^2$).

Now since equation (\ref{fact}) has unique solution when
$f_{0}(U)=f_{0}(X)=\alpha_{1}$, $f_{1}(U)=f_{1}(X)=\alpha_{2}$,
$f_{0}(V)=f_{0}(Y)=\beta_{1}$ and $f_{1}(V)=f_{1}(Y)=\beta_{2}$,
according to the above, the next corollary holds.
\begin{corollary} \label{cor}
The equation
\begin{equation}
L(u_{1},u_{2};\alpha_{1},\alpha_{2})L(v_{1},v_{2};\beta_{1},\beta_{2})=
L(y_{1},y_{2};\beta_{1},\beta_{2})L(x_{1},x_{2};\alpha_{1},\alpha_{2})
\label{lax}
\end{equation}
is uniquely solvable with respect to $u_{1},u_{2},v_{1},v_{2}$. The
mapping $R_{\alpha,\beta}(x,y)=(u,v)$ with vector parameters
$\alpha=(\alpha_{1},\alpha_{2}), \beta=(\beta_{1},\beta_{2})$,
variables $x=(x_{1},x_{2}),\ y=(y_{1},y_{2})$ and $u=(u_{1},u_{2}),\
v=(v_{1},v_{2})$ the unique solution of (\ref{lax}), is the
symplectic parametric Yang-Baxter map (\ref{symYB}) of Prop.
\ref{param}.
\end{corollary}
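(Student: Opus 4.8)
The plan is to read equation (\ref{lax}) backwards through the parametrization of the symplectic leaves and recognize it as a verbatim copy of the re-factorization equation (\ref{fact}), and then transport existence and uniqueness from Proposition \ref{gen}. Concretely, I would set $X=L'(x_{1},x_{2};\alpha_{1},\alpha_{2})$, $Y=L'(y_{1},y_{2};\beta_{1},\beta_{2})$ and, for any candidate solution, $U=L'(u_{1},u_{2};\alpha_{1},\alpha_{2})$, $V=L'(v_{1},v_{2};\beta_{1},\beta_{2})$. Since $L(\cdot\,;c_{1},c_{2})=i_{B}(L'(\cdot\,;c_{1},c_{2}))$ by definition, equation (\ref{lax}) is literally $i_{B}(U)i_{B}(V)=i_{B}(Y)i_{B}(X)$, i.e. (\ref{fact}). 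The feature hidden in the notation is that labelling the first factor by $\alpha=(\alpha_{1},\alpha_{2})$ forces $U\in\mathcal{S}_{B}(\alpha_{1},\alpha_{2})$, that is $f_{0}(U)=\alpha_{1}=f_{0}(X)$ and $f_{1}(U)=\alpha_{2}=f_{1}(X)$, and likewise $V\in\mathcal{S}_{B}(\beta_{1},\beta_{2})$ with $f_{0}(V)=\beta_{1}=f_{0}(Y)$, $f_{1}(V)=\beta_{2}=f_{1}(Y)$. So the corollary is exactly the assertion that (\ref{fact}), subject to these four Casimir constraints, has the map of Proposition \ref{param} as its one and only solution; this is precisely the equivalence between the Lax relation and the map that was flagged as not automatic in the definition following (\ref{laxmat}).

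For existence I would simply invoke Proposition \ref{gen}: on the open dense domain $\det P_{1}(X,Y)\neq0$ it produces $U,V$ through (\ref{UV}) satisfying (\ref{fact}) together with $f_{i}(U)=f_{i}(X)$ and $f_{i}(V)=f_{i}(Y)$ for $i=0,1,2$. These equalities are exactly the four Casimir constraints above, so the resulting $U,V$ lie in the correct symplectic leaves, define $u=q_{ij,kl}(U)$, $v=q_{ij,kl}(V)$, and by construction $(u,v)=Q\circ\mathcal{R}_{B}(X,Y)=R_{\alpha,\beta}(x,y)$, which is the symplectic map (\ref{symYB}). Hence the symplectic Yang--Baxter map always furnishes a solution of (\ref{lax}).

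The substance of the proof is uniqueness, where the task is to separate the nontrivial re-factorization from the trivial one $U=Y$, $V=X$ recorded before Proposition \ref{gen}. The trivial branch has $f_{0}(U)=f_{0}(Y)=\beta_{1}$ and $f_{1}(U)=\beta_{2}$, so it meets the constraint $U\in\mathcal{S}_{B}(\alpha_{1},\alpha_{2})$ only in the non-generic coincidence $\alpha=\beta$; thus for generic parameters the Casimir labelling discards it. More structurally, $\det\bigl(i_{B}(Y)i_{B}(X)\bigr)=p_{Y}(\zeta)p_{X}(\zeta)$, so every factorization into linear factors $(U-\zeta B)(V-\zeta B)$ must distribute the spectral data of $p_{X}$ and $p_{Y}$ between the two factors, and pinning the complete Casimir data $f_{0},f_{1}$ of each factor, which is exactly what $\mathcal{S}_{B}$ does, fixes $p_{U}=p_{X}$ and $p_{V}=p_{Y}$ and therefore selects the branch. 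To close the argument I would reduce to Proposition \ref{gen} by showing the constraints put us in the regime $\det(U-Y)\neq0$, where the solution is unique and equals (\ref{UV}).

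The hard part will be precisely this last implication: ruling out degenerate solutions with $\det(U-Y)=0$ that still carry the prescribed spectra $p_{U}=p_{X}$, $p_{V}=p_{Y}$. Here I would exploit the first relation in (\ref{system2}), $UB^{-1}(Y-U)=(Y-U)XB^{-1}$, which on the locus $\det(U-Y)=0$ (with $U\neq Y$, guaranteed since $f_{0}(U)=\alpha_{1}\neq\beta_{1}=f_{0}(Y)$) makes the rank-one matrix $Y-U$ intertwine $UB^{-1}$ and $XB^{-1}$ only through a common eigenvalue with aligned left/right eigenvectors; combining this with the companion relation $(X-V)B^{-1}V=B^{-1}Y(X-V)$ and the matching characteristic polynomials should show that such a configuration is confined to a proper closed subset and cannot occur off $\alpha=\beta$ on the domain $\det P_{1}(X,Y)\neq0$ of Remark \ref{remark1}. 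Once this exclusion is in hand, uniqueness follows, and together with the existence step it identifies the unique solution of (\ref{lax}) with $R_{\alpha,\beta}$ of (\ref{symYB}).
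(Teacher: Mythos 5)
Your proposal is correct and takes essentially the paper's route: the paper's entire proof of this corollary is the sentence preceding it, which identifies (\ref{lax}) with the re-factorization equation (\ref{fact}) subject to the Casimir constraints $f_{i}(U)=f_{i}(X)$, $f_{i}(V)=f_{i}(Y)$, and then cites Propositions \ref{gen} and \ref{param} --- exactly your first three paragraphs. Where you go beyond the paper is your last paragraph: the paper never rules out Casimir-constrained solutions with $\det(U-Y)=0$, even though Proposition \ref{gen} gives uniqueness only within the class $\det(U-Y)\neq 0$, so what you call the hard part is a (small) gap in the paper rather than a defect of your argument. Moreover, your sketch closes with one short computation: write $Y-U=v\ell$ with $v$ a column and $\ell$ a row; the first relation in (\ref{system2}) forces $UB^{-1}v=\mu v$ and $\ell XB^{-1}=\mu\ell$ for a common scalar $\mu$; then $YB^{-1}v=UB^{-1}v+v(\ell B^{-1}v)=(\mu+\ell B^{-1}v)v$, so $v$ is an eigenvector of $YB^{-1}$ with eigenvalue $\nu=\mu+\ell B^{-1}v$, and taking traces in $YB^{-1}=UB^{-1}+v\,\ell B^{-1}$ shows that the second eigenvalue of $UB^{-1}$ equals the second eigenvalue of $YB^{-1}$, i.e.\ a root of $p_{Y}$, contradicting $p_{U}=p_{X}$ whenever $p_{X}$ and $p_{Y}$ have no common root (the generic situation for $\alpha\neq\beta$; for $\alpha=\beta$ the statement is anyway degenerate, as formula (\ref{UV}) then collapses to $U=Y$, $V=X$ by Cayley--Hamilton). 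So such degenerate solutions do not exist at all for generic parameters --- not merely on a proper closed subset as you hedge --- every constrained solution therefore has $\det(U-Y)\neq 0$, and uniqueness follows from Proposition \ref{gen} as you intended; with that step filled in, your write-up is strictly more rigorous than the paper's own.
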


\begin{rem} \label{symMaps}
By setting $\alpha_{1}=\beta_{1}=k$, the symplectic YB map of \ref{param}
yields the symplectic parametric YB map
$ R_{\alpha_{2} \beta_{2}}$ with Lax matrix $L(x_{1},x_{2};\alpha_{2} )$ $:=L(x_{1},x_{2};k,\alpha_{2} )$.
Here $k$ is not a dynamical parameter as $\alpha_{2}$ but just a free parameter. We have analogous result by setting
$\alpha_{2}=\beta_{2}$.
If we set $\alpha_{1}=\beta_{1}$ and $\alpha_{2}=\beta_{2}$ then we derive the trivial
solution $U=Y$, $V=X$. That holds because this is the only solution of Eq.(\ref{fact}) with
$f_{0}(U)=f_{0}(Y)$ and $f_{1}(U)=f_{1}(Y)$.
\end{rem}

\subsection{Classification by normal forms} \label{33}

Equation (\ref{fact}) is invariant under conjugation i.e:
$$P^{-1}(U-\zeta B)PP^{-1}(V-\zeta B)P=P^{-1}(Y-\zeta B)PP^{-1}(X-\zeta B)P$$
The same holds also for the Casimir functions $f_{0}$, $f_{1}$,
since $f_{0}(A)$, $f_{1}(A)$ are the coefficients of $\det(A-\zeta
B)$. This means that we can restrict our attention to the next
Jordan normal forms for the matrix $B$:
$$
i)
\begin{pmatrix}
\lambda _{1} & 0 \\
0 & \lambda _{2}
\end{pmatrix}
,\ \  ii)
\begin{pmatrix}
\lambda  & 0 \\
0 & \lambda
\end{pmatrix}
,\ \ iii)
\begin{pmatrix}
\lambda  & 1 \\
0 & \lambda
\end{pmatrix}.
$$

More precisely let $B_0$ be one of these normal forms and
$$
\tilde{R}_{\alpha,\beta}(x,y)=Q \circ
\mathcal{R}_{B_0}(L_0'(x_{1},x_{2};\alpha_{1},\alpha_{2}),
L_0'(y_{1},y_{2};\beta_{1},\beta_{2})),
$$
the symplectic YB map of Prop \ref{param} with corresponding Lax
matrix $L_0(a_{1},a_{2};c_{1},c_{2})$. Let also $B$ a similar matrix
with $B_0$, $B=P B_0 P^{-1}$. Then the map
\begin{equation} \label{jord}
R_{\alpha,\beta}(x,y)=Q(P [\mathcal{R}_{B_0}( P^{-1}
L_0'(x_{1},x_{2};\alpha_{1},\alpha_{2})P,
P^{-1}L_0'(y_{1},y_{2};\beta_{1},\beta_{2})P)] P^{-1}),
\end{equation}
is a symplectic YB map with Lax matrix
$L(a_{1},a_{2};c_{1},c_{2})=PL_0(a_{1},a_{2};c_{1},c_{2})P^{-1}$ and
is exactly the unique solution of (\ref{lax}) with respect to
$u=(u_1,u_2), \ v=(v_1,v_2)$. (The first and the last multiplication
by $P$ and $P^{-1}$ respectively of (\ref{jord}) is done at each
factor of the cartesian product
$\mathcal{S}_{B}(\alpha_{1},\alpha_{2})\times
\mathcal{S}_{B}(\beta_{1},\beta_{2})$ ). Similar results hold for
the Poisson maps $R^{0} _{\alpha,\beta}$ and $R^{1} _{\alpha,\beta}$
of Prop. \ref{param}.

If we are interested in real Lax matrices we have to include
also the case where
$B_0= \begin{pmatrix}
\lambda _{1} & - \lambda _{2} \\
 \lambda _{2} & \lambda _{1}
\end{pmatrix}.$

\vspace{0.5cm}

\begin{expl}
We are going to apply the above results for $B=I$. In this case the non--zero Poisson
brackets of $\mathcal{L}_{I}$ are given by the relations:
\begin{eqnarray*}
 \{a_{11},a_{12} \} =-a_{12}, \ \{a_{11},a_{21} \}
=a_{21}, \ \{a_{12},a_{21} \} =a_{22}-a_{11}, \\
 \{a_{12},a_{22} \}
=-a_{12}, \ \{a_{21},a_{22} \} =a_{21}.
\end{eqnarray*}
 The Casimir functions are
$f_{0}(A)=\det A,\ f_{1}(A)=a_{11}+a_{22}$. If we set
$f_{0}(A)=c_{1} ,\ f_{1}(A)=c_{2}$, and solve with respect to
$a_{11}, \ a_{12}$, for $a_{12} \neq 0$, we come up to the Lax
matrix
$$
L(a_{1},a_{2};c_{1} ,c_{2} )=
\begin{pmatrix}
a_{1}-\zeta  & a_{2} \\
\frac{a_{1}(c_{2} -a_{1})-c_{1} }{a_{2}} & c_{2} -a_{1}-\zeta
\end{pmatrix},
$$
where $a_{1},\ a_{2}$ here denotes $a_{11}, \ a_{12}$ respectively.
According to \ref{cor} the unique solution of the equation
$L(u_{1},u_{2};\alpha_{1},\alpha_{2})L(v_{1},v_{2};\beta_{1},\beta_{2})=
L(y_{1},y_{2};\beta_{1},\beta_{2})L(x_{1},x_{2};\alpha_{1},\alpha_{2})$
for any $\zeta \in\mathbb{C}$, gives the parametric YB map on $\mathbb{C}^2\times \mathbb{C}^2$:
$$
R_{\alpha,\beta}((x_{1},x_{2}),(y_{1},y_{2}))=((u_1,u_2),(v_1,v_2))=
Q \circ \mathcal{R}_{I}(L'(x_{1},x_{2};\alpha_{1},\alpha_{2}),
L'(y_{1},y_{2};\beta_{1},\beta_{2}))
$$
where $L'(x_{1},x_{2};\alpha_{1},\alpha_{2})=L(x_{1},x_{2};\alpha_{1},\alpha_{2})+\zeta I$.
This map is symplectic with respect to the reduced symplectic structure defined by :
$$\{ x_1,x_2 \}=-x_2, \ \{ y_1,y_2\}=-y_2, \ \{x_i,y_j \}=0 ,\ \ i=1, 2,$$ on the corresponding
symplectic leave
$=\{(L(x_{1},x_{2};\alpha_{1},\alpha_{2}), L(y_{1},y_{2};
\beta_{1},\beta_{2})) \ /  \ x_1,  y_1 \in  \mathbb{C}, \ x_2,  y_2 \in  \mathbb{C}^* \}$ of
$\mathcal{L}_I \times \mathcal{L}_I$.

We can restrict matrix $A$ to $SL_{2}(\mathbb{C})$ by setting
$f_{0}(A)=1, \ f_{1}(A)=c$. In this case the corresponding Lax matrix will be
$M(a_{1},a_{2};c )=L(a_{1},a_{2};1,c)$.
Now the unique solution of the equation
$M(u_{1},u_{2};\alpha)M(v_{1},v_{2};\beta)=M(y_{1},y_{2};\beta)M(x_{1},x_{2};\alpha)$
gives the parametric YB map:
$$R'_{\alpha,\beta}((x_{1},x_{2}),(y_{1},y_{2}))=Q \circ \mathcal{R}_{I}(L'(x_{1},x_{2};1,\alpha),
L'(y_{1},y_{2};1,\beta)).$$
\end{expl}

\section{Non--quadrirational Yang--Baxter maps}
Non--quadrirational YB maps arise when
the constant matrix $B$ of $\mathcal{L_{B}}$ is non--invertible. In
this section we show how, in some cases, we can obtain non--quadrirational
parametric YB maps as limits of the quadrirational maps of
the previous section.

We consider invertible constant matrices $B=B(\varepsilon )$
depending on a parameter $\varepsilon $ such that
$\underset{\varepsilon \rightarrow 0}{\lim} \det{B}=0$, and
construct the quadrirational symplectic YB map $R(\varepsilon )$
with the corresponding Lax matrix $L(\varepsilon )$ of Prop.
\ref{param}. By taking the limit of $R(\varepsilon )$ for
$\varepsilon \rightarrow 0$ we derive a rational non--quadrirational YB map
on $\mathbb{C}^2\times \mathbb{C}^2$. This map is symplectic with
the sumplectic form that is induced by taking the limit of the
stucture matrix $J_{B(\varepsilon )}(L(\varepsilon)+\zeta
B(\varepsilon ))$. We restrict our analysis to the Jordan normal
forms.
\subsection{The Adler-Yamilov map} \label{adl}
Consider
$B=
\begin{pmatrix}
1 & 0 \\
0 & \varepsilon
\end{pmatrix}
$.
The Casimir functions on $\mathcal{L_{B}}$ will be
$$
f_{0}(A)=\det A \ ,\ \
f_{1}(A)=a_{11}\varepsilon +a_{22}
$$
We set $f_{0}(A)=c$, $f_{1}(A)=1$ and solve with respect to $a_{11}$, $a_{22}$:
$$
a_{22}=1-\varepsilon a_{11},\ \
a_{11}=\frac{1-\sqrt{1-4\varepsilon (c+a_{12}a_{21})}}{2\varepsilon}
$$
Now we can construct the Lax matrix
\begin{equation} \label{deg1}
L(a_{1},a_{2};c ,1 )=
\begin{pmatrix}
\frac{1-\sqrt{1-4\varepsilon (c+a_{1}a_{2})}}{2\varepsilon }-\zeta
& a_{1} \\
a_{2} & \frac{1}{2}(1+\sqrt{1-4\varepsilon (c+a_{1}a_{2})})-\varepsilon \zeta
\end{pmatrix}
\end{equation}
where $a_{1}=a_{12}, \ a_{2}=a_{21}$. According to the previous section, the unique solution of the equation
$$
L(u_{1},u_{2};\alpha,1)L(v_{1},v_{2};\beta,1)=
L(y_{1},y_{2};\beta,1)L(x_{1},x_{2};\alpha,1)
$$
will be $u_{1}=u_{12}$, $u_{2}=u_{21}$, $v_{1}=v_{12}$, $v_{2}=v_{21}$ where $u_{ij}, \ v_{ij}$
are the corresponding elements of the matrices:
\begin{eqnarray*}
U = (f_{2}(X)YX-f_{0}(X)B^{2})(f_{2}(X)(YB+BX)-f_{1}(X)B^{2})^{-1}B \\
V = B^{-1}(YB+BX-UB)
\end{eqnarray*}
by setting $X=L'(x_{1},x_{2};\alpha,1)$ and
$Y=L'(y_{1},y_{2};\beta,1)$ (from definition $L'=L+ \zeta B $). Here
$f_{2}(X)=\det B=\varepsilon $, $f_1(X)=1$ and $f_0(X)=\alpha$. This
solution gives the quadrirational parametric YB map:
$${R}_{\alpha \beta }((x_{1},x_{2}),(y_{1},y_{2}))=(({u}_{1},{u}_{2}),
({v}_{1},{v}_{2})).$$

Now if we take the limit of $u_{i},  v_{i}, \ i=1,2$, for $\varepsilon\rightarrow 0$, we derive
\begin{eqnarray*}
\bar{u}_{1} = \lim_{\varepsilon \rightarrow 0}u_{1}=y_{1}-\frac{(a-b)x_{1}}
{1+x_{1}y_{2}} \ , \ \ \ \
\bar{u}_{2}&=&\lim_{\varepsilon \rightarrow 0}u_{2}=y_{2}\ , \\
\bar{v}_{1} =\lim_{\varepsilon \rightarrow 0}v_{1}=x_{1}\ ,\ \ \ \ \ \ \ \ \ \ \ \ \ \ \ \ \ \ \ \ \
\bar{v}_{2}&=&\lim_{\varepsilon \rightarrow 0}
v_{2}=x_{2}+\frac{(a-b)y_{2}}{1+x_{1}y_{2}}
\end{eqnarray*}
and the parametric YB map
$\bar{R}_{\alpha \beta }((x_{1},x_{2}),(y_{1},y_{2}))=((\bar{u}_{1},\bar{u}%
_{2}),(\bar{v}_{1},\bar{v}_{2}))$.
The latter is a map related to the Nonlinear $Schr\ddot{o}dinger$
systems \cite{AY94,PT07}.

The induced symplectic structure is derived from (\ref{bracket}) by taking the limit for $\varepsilon \rightarrow 0$ of
$J_B(L'(x_{1},x_{2};\alpha,1))$ and $J_B(L'(y_{1},y_{2};\beta,1))$ :
$\{x_1,x_2\}=1, \ \{y_1,y_2\}=1, \ \{x_i,y_j\}=0$
i.e. the canonical symplectic form. The YB map $\bar{R}_{\alpha \beta }$ is symplectic with respect to this form.
\subsection{A lift of the KdV quadgraph equation} \label{KdV}
Now we consider
$B=
\begin{pmatrix}
\varepsilon  & 1 \\
0 & \varepsilon
\end{pmatrix}
$.
In this case the Casimir functions on $\mathcal{L_{B}}$ will be
$$
f_{0}(A)=\det A \ ,\ \
f_{1}(A)=\varepsilon (a_{11} +a_{22})-a_{21}.
$$
We set again $f_{0}(A)=c$, $f_{1}(A)=1$ and solve with respect to $a_{21}$, $a_{12}$:
$$
a_{21}=\varepsilon (a_{11} +a_{22})-1,\ \
a_{12}=\frac{a_{11}a_{22}-c}{\varepsilon (a_{11}+a_{22})-1}.
$$
The Lax matrix will be
\begin{equation} \label{deg2}
L(a_{1},a_{2};c ,1 )=
\begin{pmatrix}
a_{1}+\varepsilon\zeta
& \frac{a_{1}a_{2}-c}{\varepsilon (a_{1}+a_{2})-1}+\zeta \\
\varepsilon (a_{1} +a_{2})-1 & a_{2}+\varepsilon\zeta
\end{pmatrix},
\end{equation}
where here $a_{1}$, $a_{2}$ denote $a_{11}$, $a_{22}$ respectively.
As before the unique solution of the equation
$$
L(u_{1},u_{2};\alpha,1)L(v_{1},v_{2};\beta,1)=
L(y_{1},y_{2};\beta,1)L(x_{1},x_{2};\alpha,1)
$$
will be the elements $u_{11}$, $u_{22}$ and $v_{11}$, $v_{22}$ of the matrices
(\ref{UV}) (denoted by $u_{1}, \ u_{2}$ and $v_{1}, \ v_{2}$ respectively),
where
$X=L'(x_{1},x_{2};\alpha,1)$ and
$Y=L'(y_{1},y_{2};\beta,1)$. Here
$f_{2}(X)=\det B=\varepsilon^2 $. So we derive the corresponing quadrirational YB map
${R}_{\alpha \beta }((x_{1},x_{2}),(y_{1},y_{2}))=(({u}_{1},{u}_{2}),
({v}_{1},{v}_{2}))$, with Poisson brackets
\begin{equation} \label{pkdv}
\{x_1,x_4 \}=-1+\varepsilon(x_1+x_4), \ \{y_1,y_4 \}=-1+\varepsilon(y_1+y_4), \ \{x_i,y_j\}=0.
\end{equation}
By taking the limits of $u_{1}, \ u_{2}$ and $v_{1}, \ v_{2}$, for
$\varepsilon\rightarrow 0$, we derive the parametric Yang-Baxter map
$\bar{R}_{\alpha \beta }((x_{1},x_{2}),(y_{1},y_{2}))= (
(\bar{u}_{1},\bar{u}_{2}),(\bar{v}_1,\bar{v}_2) )$ where
\begin{equation} \label{kdvmap}
\bar{u}_1=y_{1}+\frac{\alpha -\beta }{x_{1}+y_{2}}, \ \bar{u}_2=y_{2}, \
\bar{v}_1=x_{1}, \ \bar{v}_2=x_{2}-\frac{\alpha -\beta }{x_{1}+y_{2}}
\end{equation}
This map is symplectic with respect to the induced symplectic form defined by the limit of (\ref{pkdv}):
$$\{x_1,x_2\}=-1, \ \{y_1,y_2\}=-1, \ \{x_i,y_j\}=0.$$

We are going to show that this can be squeezed down to the KdV quadgraph equation.
We perform, first, the following change of variables:
$x_2 \mapsto -x_2$, $y_2 \mapsto -y_2$, $\bar{u}_2 \mapsto -\bar{u}_2$, $\bar{v}_2 \mapsto -\bar{v}_2$ so
$$\bar{u}_1=y_{1}+\frac{\alpha -\beta }{x_{1}-y_{2}}, \ \bar{u}_2=y_{2}, \ \bar{v}_1=x_{1}, \
\bar{v}_2=x_{2}+\frac{\alpha -\beta }{x_{1}-y_{2}}$$
Notice now that if $y_{1}=x_{2}$ then $\bar{u}_{1}=\bar{v}_{2}$ and labeling the variables as
$y_{1}=x_{2}=f$, $\bar{u}_{1}=\bar{v}_{2}=f_{12}$, $\bar{v}_1=x_{1}=f_1$, $y_{2}=\bar{u}_2=f_2$
both first and last  equations reduce to the KdV quadgraph equation
$$(f_{12}-f)(f_1-f_2)=\alpha -\beta .$$
This is the reason why (\ref{kdvmap}) can be thought as a lift of KdV quadgraph equation. Actually this is an instance of 
the fact that all quadgraph equations of the ABS classification in \cite{ABS1}  can be lifted to a 2--field quadgraph 
equation that can be cast into YB map form  \cite{PTprep}.
\begin{rem}
The limits of the Lax matrices (\ref{deg1}) and (\ref{deg2}) for
$\varepsilon \rightarrow 0$ are
$$
L_1(a_{1},a_{2};c)=%
\begin{pmatrix}
a_{1}a_{2}+c-\zeta  & a_{1} \\
a_{2} & 1%
\end{pmatrix}%
,\ \ L_2(a_{1},a_{2};c)=%
\begin{pmatrix}
a_{1} & c-a_{1}a_{2}-\zeta  \\
-1 & a_{2}%
\end{pmatrix}%
$$
respectively. These matrices are Lax matrices of the non--quadrirational YB
maps of \ref{adl} and \ref{KdV} respectively but the equation
$L_2(u_{1},u_{2};\alpha)L_2(v_{1},v_{2};\beta)=L_2(y_{1},y_{2};\beta)L_2(x_{1},x_{2};\alpha)$
is not uniquely solvable with respect to $u_i, v_i$. Therefore we
cannot derive the YB map (\ref{kdvmap}) directly from the Lax matrix
$L_2(a_{1},a_{2};c)$.
\end{rem}

\section{Conclusion} \label{Conclusion}
We saw how through matrix re--factorization and linear algebra considerations
 (namely Caley-Hamilton theorem) one is guided to consider
the Casimirs of the Sklyanin bracket as  the main conditions 
for a non--trivial solution of equation (\ref{fact}). We conjecture 
that a formula analogous to (\ref{UV}) can be found in the case of $n \times n$ matrices ($n>2$).

\appendix
\section{Proof of the YB property of the map given by (\ref{UV})}
We give a detailed proof of the Yang--Baxter property of the map of
\ref{gen}. Let $X,Y,Z$ be generic elements of $GL_{2}(\mathbb{C})$. 
Because of Prop. \ref{anagea}, it
suffices to show that the equation
\begin{equation} \label{xyz1}
(X'-\zeta B)(Y'-\zeta B)(Z'-\zeta B)=(X-\zeta B)(Y-\zeta B)(Z-\zeta B)
\end{equation}
with $\det(X'-\zeta B)=\det(X-\zeta B)$ and $\det(Y'-\zeta
B)=\det(Y-\zeta B)$ implies that $X'=X$, $Y'=Y$ and $Z'=Z$. 

If we set
$
XYZ=K, \ XYB+XBZ+BYZ=L \ and \ XB^{2}+BYB+B^{2}Z=M
$
then from equation (\ref{xyz1}) we derive the system :
$$X^{\prime }Y^{\prime }Z^{\prime }=K, \
X^{\prime }Y^{\prime }B+X^{\prime }BZ^{\prime }+BY^{\prime}Z^
{\prime}=L,  \
 X^{\prime }B^{2}+BY^{\prime }B+B^{2}Z^{\prime }=M$$
Last system implies
\begin{equation} \label{tritis}
(X'B^{-1})^{3}B^{3}-(X'B^{-1})^{2}M+X'B^{-1}L=K.
\end{equation}
Since $\det(X'-\zeta B)=\det(X-\zeta B)$ we have that
\begin{equation} \label{caley3}
f_{2}(X)(X'B^{-1})^{2}-f_{1}(X)(X'B^{-1})+f_{0}(X)I=0
\end{equation}
and by evaluating the powers of $X'B^{-1}$, equation (\ref{tritis})
gives:
$$ X'B^{-1}[f_{2}^{2}L-f_{2}f_{1}M+(f_{1}^{2}-f_{2}f_{0})B^{3}]=
f_{2}^{2}K-f_{2}f_{0}M+f_{1}f_{0}B^{3}
$$
(to alleviate notation we have dropped the $X$ dependence from $f_{i}(X)$ and denote it simply by  $f_{i}$,
for $i=0,1,2$).The last equation can be written as
\begin{equation} \label{tel}
X'B^{-1}[f_{2}^{2}L-f_{2}f_{1}M+(f_{1}^{2}-f_{2}f_{0})B^{3}]=XB^{-1}[f_{2}^{2}L-f_{2}f_{1}M+(f_{1}^{2}-f_{2}f_{0})B^{3}]+A
\end{equation}
where
$$
A=f_{2}^{2}K-f_{2}f_{0}M+f_{1}f_{0}B^{3}-XB^{-1}[f_{2}^{2}L-f_{2}f_{1}M+(f_{1}^{2}-f_{2}f_{0})B^{3}].
$$
Replacing again $K,L,M$ by $XYZ$, $XYB+XBZ+BYZ$, $XB^{2}+BYB+B^{2}Z$ respectively we can factorize $A$ as follows
$$A=(f_{2}(XB^{-1})^{2}-f_{1}XB^{-1}+f_{0})(f_{1}B^{3}-f_{2}B^{2}Z-f_{2}BYB)$$
and from Cayley--Hamilton theorem we get that $A=0$. So from
(\ref{tel}), for the generic elements $X, \ Y, \ Z \in
GL_{2}(\mathbb{C})$ such that
$\det[f_{2}^{2}L-f_{2}f_{1}M+(f_{1}^{2}-f_{2}f_{0})B^{3}] \neq 0$ we
have that $X'=X$. In a similar way we can prove that $Z'=Z$ and finally $Y'=Y$ follows as well.


\begin{thebibliography}{10}
\bibitem{ABS1}
Adler V.E., Bobenko A.I., Suris Yu.B. [2003] Classification of
integrable equations on quad-graphs. The consistency approach. Comm.
Math. Phys. 233, 513-543.

\bibitem{ABS2}
Adler V.E., Bobenko A.I., Suris Yu.B. [2004] Geometry of
Yang-Baxter maps: pencils of conics and quadrirational mappings.
Comm. Anal. Geom. 12, 967-1007.

\bibitem{AY94}
Adler V.E., Yamilov R.I. [1994] Explicit auto-transformations of
integrable chains. J. Phys. A: Math. Gen. 27, 477-492

\bibitem{BS}
Bobenko A.I., Suris Yu.B. [2002] Integrable systems on quad-graphs. Int.
Math. Res. Notices, 2002, No. 11, 573–611.

\bibitem{DRIN}
Drinfeld V.G. [1992] On some Unsolved Problems in Quantum Group
Theory. Lecture Notes in Math. 1510, 1-8.

\bibitem{ESS} Etingof P., Schedler T. and Soloviev A. [1999]
Set-theoretical solutions to the quantum Yang-Baxter equation
{\em Duke Math. J.} {\bf 100} no. 2 169--209


\bibitem{ETING} Etingof P. [2003]
Geometric crystals and set-theoretical solutions to the quantum Yang-Baxter equation
{\em Comm. Algebra} {\bf 31} no. 4 1961--1973

\bibitem{LYZ}
Lu J.-H., Yan M., Zhu Y.-C. [2000] On the set--theoretical Yang--Baxter equation.
Duke Math. J. 104, 1-18.

\bibitem{N} Nijhoff F.W. [2002]
Lax pair for the Adler (lattice Krichever-Novikov) system,
Phys.
Lett. A {\bf 297} 49--58


\bibitem{PTV}
Papageorgiou V.G., Tongas A.G., Veselov A.P. [2006] Yang-Baxter maps
and symmetries of integrable equations on quad-graphs. J. Math.
Phys. 47, 083502 1-16.

\bibitem{PT07}
Papageorgiou V.G., Tongas A.G.,[2007] Yang-Baxter maps and multi-field integrable lattice equations.
J. Phys. A  40, no. 42, 12677--12690.

\bibitem{PTprep}
Papageorgiou V.G., Tongas A.G.,  [2008] Yang-Baxter maps
associated to elliptic curves (in preparation).

\bibitem{RV}
Reshetikhin N., Veselov A.P. [2005] Poisson Lie groups and
Hamiltonian theory of the Yang--Baxter maps. math.QA/0512328.

\bibitem{SKL83}
Sklyanin E.K. [1983] Some algebraic structures connected with the
Yang-Baxter equation. Funct. Anal. Appl. 16, No 4, 263-270.

\bibitem{SKL88}
Sklyanin E.K. [1988] Classical limits of SU(2)--invariant solutions
of the Yang-Baxter equation. J. Soviet Math. 40, No 1, 93-107.

\bibitem{SV}
Suris Yu.B., Veselov A.P. [2003] Lax pairs for Yang--Baxter maps. J. Nonlin. Math. Phys. 10, suppl.2, 223-230.

\bibitem{VES03}
Veselov A.P. [2003] Yang-Baxter maps and integrable dynamics. Phys.
Lett. A, 314, 214-221.


\bibitem{VES07}
Veselov A.P. [2007] Yang-Baxter maps: dynamical point of view,
Combinatorial Aspects of Integrable Systems (Kyoto, 2004), MSJ Mem. vol 17,
pp 145-67.

\bibitem{WX}
Weinstein A., Xu P. [1992] Classical solutions to the Quantum
Yang--Baxter equation, Commun. Math. Phys. 148, 309-343.

\end{thebibliography}
\end{document}